\def\@marginparreset{%
  \reset@font
  \tiny
  \linespread{0.9}
  \@marginparfont
  \@setminipage
}
\def\@marginparfont{\normalfont\itshape\color{red}\raggedleft}
\crefname{theorem}{Theorem}{Theorems}
\crefname{lemma}{Lemma}{Lemmas}
\crefname{remark}{Remark}{Remarks}
\crefname{prop}{Proposition}{Propositions}
\crefname{defn}{Definition}{Definitions}
\crefname{corollary}{Corollary}{Corollaries}
\crefname{conjecture}{Conjecture}{Conjectures}
\crefname{chapter}{Chapter}{Chapters}
\crefname{section}{Section}{Sections}
\crefname{figure}{Figure}{Figures}
\theoremstyle{plain}
\newtheorem{theorem}{Theorem}[section]
\newtheorem{lemma}[theorem]{Lemma}
\theoremstyle{definition}
\theoremstyle{remark}
\newcommand{\cF}{\mathcal F}
\newcommand{\cG}{\mathcal G}
\newcommand{\cH}{\mathcal H}
\newcommand{\cP}{\mathcal P}
\renewcommand{\subset}{\subseteq}
\renewcommand{\supset}{\supseteq}
\renewcommand{\emptyset}{\varnothing}
\newcommand{\gen}[1]{\langle #1 \rangle}
\newcommand{\PP}{\mathbb{P}}
\newcommand{\EE}{\mathbb{E}}
\newcommand{\IND}{\mathbbm{1}}
\newcommand{\cost}{\textrm{cost}}
\newcommand{\e}{\mathrm{e}}
\newcommand{\pc}{p_\mathrm{c}}
\newcommand{\qc}{q_\mathrm{c}}
\newcommand{\qf}{q_\mathrm{f}}
\renewcommand{\le}{\leqslant}
\renewcommand{\ge}{\geqslant}
\begin{document}

\title{Thresholds and expectation thresholds for larger $p$}

\date{\today}

\author[T. Przyby\l owski]{Tomasz Przyby\l owski}
\thanks{First author was supported by the Additional Funding Programme for Mathematical Sciences, delivered by EPSRC (EP/V521917/1) and the Heilbronn Institute for Mathematical Research.}

\author[O. Riordan]{Oliver Riordan}
\address{Mathematical Institute, University of Oxford \\
 Oxford OX2 6GG, UK}
\email{przybylowski@maths.ox.ac.uk, riordan@maths.ox.ac.uk}

\begin{abstract}
Let $\pc$ and $\qc$ be the threshold and the expectation threshold, respectively, of an increasing family $\cF$ of subsets of a finite set $X$, and let $l$ be the size of a largest minimal element of $\cF$. Recently, Park and Pham proved the Kahn--Kalai conjecture, which says that $\pc \leqslant K q_c \log_2 l$ for some universal constant $K$. Here we slightly strengthen their result by showing that $p_c \leqslant 1 - \e^{-K \qc \log_2 l}$. The idea is to apply the Park--Pham Theorem to an appropriate `cloned' family $\cF_k$, reducing the general case (of this and related results) to the case where the individual element probability $p$ is small.

\end{abstract}

\maketitle

\section{Introduction}

Given a finite set $X$ and $p \in [0,1]$, let $X_p$ be the random subset where each element is included with probability $p$, independently of the others; we call this a \emph{$p$-random subset of $X$} for short. A family $\cF \subset \cP(X)$ is \textit{increasing} (or an \emph{up-set}) if $A \in \cF$ and $A \subset B \subset X$ implies $B \in \cF$. Throughout, $\cF$ will denote a non-trivial increasing family, so $\cF \neq \emptyset$, $\cP(X)$. 

Let $\cG \subset \cP(X)$. Following (to some extent) the terminology in \cite{parkpham}, for $q \in [0,1]$, we define the \textit{$q$-cost} of $\cG$ to be
\begin{equation*}
\cost_q(\cG) := \sum_{S \in \cG} q^{|S|} = \EE[ | \{S \in \cG\colon S \subset X_q\} |].
\end{equation*}
We say that $\cG$ is \textit{$q$-cheap} if $\cost_q(\cG) \le \frac{1}{2}$. We say that $\cG$ \emph{covers} $\cF$ if 
\begin{equation*}
\cF \subset \gen{\cG} := \bigcup_{S \in \cG} \{T\colon T \supset S\},
\end{equation*}
i.e., if $\cF$ is contained in the up-set generated by $\cG$,
and that $\cG$ is a \textit{$q$-cheapest} cover of $\cF$ if it has minimal $q$-cost among all covers of $\cF$. Of course, such a $\cG$ may not be unique.

The \textit{threshold} of $\cF$ is the unique $\pc \in (0,1)$ such that $\PP(X_{\pc} \in \cF) = \frac{1}{2}$. Following Kahn and Kalai~\cite{kahnkalai}, the \textit{expectation threshold} of $\cF$ is defined to be
\begin{equation*}
\qc(\cF) := \sup\{ q \in [0,1]\colon \text{there exists a $q$-cheap cover of } \cF\}.
\end{equation*}
Kahn and Kalai noted that $\qc$ easily (by applying the union bound) gives a lower bound on~$\pc$. Strikingly, they conjectured, and Park and Pham \cite{parkpham} proved, that a not-too large multiple of $\qc$ provides an upper bound on $\pc$. Let $l(\cF)$ be the size of a largest minimal element of $\cF$.

\begin{theorem}[\cite{parkpham}]\label{parkpham}
There is a universal constant $K$ such that for every finite set~$X$ and non-trivial increasing family $\cF \subset \cP(X)$ with $l(\cF) \ge 2$,
\begin{equation}\label{parkphameq}
\qc(\cF) \le \pc(\cF) \le K \qc(\cF) \log_2 l(\cF).
\end{equation}
\end{theorem}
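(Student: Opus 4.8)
I shall sketch a proof following the lines of Park and Pham's ``cheap cover'' argument, reducing the upper bound in \eqref{parkphameq} to a one-round lemma that is then iterated $\Theta(\log_2 l)$ times. The lower bound $\qc(\cF)\le\pc(\cF)$ is just the union bound: if $\cG$ is a $q$-cheap cover of $\cF$, then $X_q\in\cF$ forces $X_q\supset S$ for some $S\in\cG$, so
\[
\PP(X_q\in\cF)\le\sum_{S\in\cG}\PP(X_q\supset S)=\sum_{S\in\cG}q^{|S|}=\cost_q(\cG)\le\tfrac12 ,
\]
whence $q\le\pc(\cF)$, and taking the supremum over such $q$ gives the claim. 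For the upper bound set $l=l(\cF)\ge2$. Since $\cF$ and its family $\cM$ of minimal elements have exactly the same covers, and the members of $\cM$ all have size at most $l$, we may work with $\cM$ throughout; and since $q\mapsto\cost_q(\cG)$ is non-decreasing for each fixed $\cG$ while there are only finitely many candidate covers, the set $\{q:\cF\text{ has a $q$-cheap cover}\}$ is a closed interval $[0,\qc(\cF)]$. Using in addition that $p\mapsto\PP(X_p\in\cF)$ is continuous and strictly increasing, with value $\tfrac12$ at $\pc(\cF)$, the upper bound follows once we establish: \emph{there is a universal constant $K$ such that, whenever $\cF$ has no $q$-cheap cover and $p\ge Kq\log_2 l$, one has $\PP(X_p\in\cF)\ge\tfrac12$} --- for then, applying this with any $q>\qc(\cF)$ (so that no $q$-cheap cover exists) and letting $q\downarrow\qc(\cF)$ gives $\pc(\cF)\le K\qc(\cF)\log_2 l$.

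To prove this last assertion, put $m:=\lceil\log_2 l\rceil+1$ and expose $X_p$ as a union $W_1\cup\dots\cup W_m$ of independent $p_0$-random subsets of $X$ with $p_0:=C_0 q$; choosing the universal constants $C_0$ and $K:=2C_0$ suitably, $1-(1-p_0)^m\le p$, so $X_p$ stochastically dominates $W_1\cup\dots\cup W_m$ and it suffices to prove $W_1\cup\dots\cup W_m\in\cF$ with probability at least $\tfrac12$. Run the following process: let $\cU_0:=\cM$, and given $\cU_{i-1}$ and $W_i$ let $\cU_i$ be the family of minimal elements of $\{\,S\setminus W_i : S\in\cU_{i-1},\ |S\setminus W_i|\le l/2^i\,\}$. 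Then every member of $\cU_i$ has size at most $l/2^i$; since $l/2^m<1$, the family $\cU_m$ is empty or equal to $\{\emptyset\}$; and $\emptyset\in\cU_i$ for some $i$ exactly when $W_1\cup\dots\cup W_i$ contains a minimal element of $\cF$, i.e.\ lies in $\cF$. So it is enough to show that with probability at least $\tfrac12$ we have $\cU_m\ne\emptyset$.

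The heart of the matter, and the step I expect to be the main obstacle, is a one-round estimate: \emph{if $\cU_{i-1}$ is nonempty with no $q$-cheap cover, then with high conditional probability $\cU_i$ is again nonempty with no $q$-cheap cover}, the ``failure'' probabilities being small enough to sum (over the $m$ rounds) to at most $\tfrac12$. On the complementary bad event one argues by contradiction, in the style of Alweiss--Lovett--Wu--Zhang and Frankston--Kahn--Narayanan--Park. If $\cU_i$ were empty then $W_i$ would have absorbed less than half of every member of $\cU_{i-1}$; this is unlikely because the absence of a cheap cover forces $\cU_{i-1}$ to be quantitatively ``spread'' --- rich enough that a $\Theta(q)$-random set typically swallows most of some member. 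And if $\cU_i$ had a $q$-cheap cover, then a counting/encoding argument converts it, together with the subsets of $W_i$ that were absorbed, into a $q$-cheap cover of $\cU_{i-1}$, contradicting the hypothesis on $\cU_{i-1}$; here the non-existence of a cheap cover plays the role that a spreadness hypothesis plays in those earlier works, which is what allows $\qc$ (rather than the larger fractional expectation threshold) to appear in the conclusion. Making the bad-event probability small enough --- which calls for a careful weighted count rather than a naive union bound --- is the delicate core of the proof.

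Granting the one-round estimate, $\cU_0=\cM$ has no $q$-cheap cover because $\cF$ has none, so by a union bound over the rounds, with probability at least $\tfrac12$ no bad event occurs; then every $\cU_i$ is nonempty, in particular $\cU_m\ne\emptyset$, hence $\cU_m=\{\emptyset\}$ and $W_1\cup\dots\cup W_m\in\cF$, as needed. Finally, the bound carries $\log_2 l$ rather than $\log_2|X|$ precisely because the halving schedule $l,l/2,\dots$ is anchored at $l=l(\cF)$ via the fact that all minimal elements of $\cF$ have size at most $l$, so only $\lceil\log_2 l\rceil+1$ rounds are required.
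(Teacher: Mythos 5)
Theorem \ref{parkpham} is quoted from \cite{parkpham} and is not proved in this paper: the paper's sole contribution is to deduce \cref{ourthm} from it via the cloning lemma (\cref{lemclone}). So there is no in-paper proof of \eqref{parkphameq} against which your attempt can be compared; the ``paper's own proof'' is simply the citation.

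Judged on its own as a sketch of the Park--Pham argument, yours has the right shape: the union-bound derivation of $\qc\le\pc$, the reduction to the family $\cM$ of minimal elements, the closedness of $\{q:\cF\text{ has a }q\text{-cheap cover}\}$, the decomposition of $X_p$ into $m\approx\log_2 l$ independent sprinkles, and the iterative halving schedule are all as in \cite{parkpham,bell}. But it is not a proof. Everything hinges on the one-round estimate, which you state, correctly flag as the crux, and do not prove. There is also a conceptual slip in how you describe it: you write that if $\cU_i$ had a $q$-cheap cover, the encoding ``converts it \dots\ into a $q$-cheap cover of $\cU_{i-1}$, contradicting the hypothesis.'' If that conversion were deterministic the bad event would never occur, which is false (for instance $W_i=\emptyset$ can render $\cU_i$ empty, hence trivially $q$-small, for a suitable $\cU_{i-1}$). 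What Park and Pham actually do is bound the \emph{probability} of the bad event by a weighted union bound over certificates --- a hypothetical cheap cover of the fragment family paired with the specific pieces of $W_i$ that produced each fragment --- and show that the certificates' total $q$-cost, weighted by the probability of the $W_i$-realisations giving rise to them, is small. That weighted count is the theorem; without it the sketch restates the goal rather than proving it.
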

 
In~\cite{bell}, a slightly simpler version of the proof from~\cite{parkpham} is given, which yields $K \le 16$ (or $K \le 11$ if $l(\cF) \ge 4$). The arguments in~\cite{parkpham, bell} easily give the stronger bound
\begin{equation}\label{eqpp2}
 \pc(\cF) \le 1 - (1-K \qc(\cF))^{\log_2 l(\cF)},
\end{equation}
whenever $\qc(\cF)\le 1/K$. Our aim is to strengthen this bound even further.

\begin{theorem}\label{ourthm}
Let $K$ be the universal constant from \cref{parkpham}. Then for every finite set~$X$ and non-trivial increasing family $\cF \subset \cP(X)$ with $l(\cF) \ge 2$,
\begin{equation}\label{ourbnd}
\pc(\cF) \le 1 - \e^{-K \qc(\cF) \log_2 l(\cF)}.
\end{equation}
\end{theorem}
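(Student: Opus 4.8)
The plan is to apply \cref{parkpham} not to $\cF$ directly, but to a $k$-fold \emph{clone} of it, and then send $k\to\infty$. Given $k\ge 1$, let $\pi\colon\cP(X\times[k])\to\cP(X)$ be the projection $\pi(B)=\{x\in X\colon (x,i)\in B\text{ for some }i\in[k]\}$, and define the cloned family
\begin{equation*}
\cF_k := \{B\subseteq X\times[k]\colon \pi(B)\in\cF\},
\end{equation*}
which is again a non-trivial increasing family, now on $X\times[k]$. I would establish three facts: \textbf{(i)} $(1-\pc(\cF_k))^k=1-\pc(\cF)$; \textbf{(ii)} $l(\cF_k)=l(\cF)$; and \textbf{(iii)} $\qc(\cF_k)\le \qc(\cF)/k$; the theorem then follows by plugging $\cF_k$ into \cref{parkpham} and letting $k\to\infty$.

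For (i), observe that if $Y$ is a $q$-random subset of $X\times[k]$ then, independently over $x\in X$, we have $x\in\pi(Y)$ with probability $1-(1-q)^k$; hence $\pi(Y)$ has the law of $X_p$ with $p=1-(1-q)^k$, so that $\PP(Y\in\cF_k)=\PP(X_p\in\cF)$. Setting this equal to $\tfrac12$ and using strict monotonicity of $p\mapsto\PP(X_p\in\cF)$ gives $\pc(\cF_k)=1-(1-\pc(\cF))^{1/k}$. For (ii), note that a minimal element $B$ of $\cF_k$ cannot contain two copies $(x,i),(x,j)$ of one point $x$ (deleting one of them would not change $\pi(B)$), so $B=\{(x,\tau(x))\colon x\in T\}$ with $T=\pi(B)$ and $\tau\colon T\to[k]$; and such a $B$ is minimal in $\cF_k$ precisely when $T$ is minimal in $\cF$. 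Thus the minimal elements of $\cF_k$ biject, size-preservingly, with pairs (minimal element of $\cF$, a choice of copy for each of its points), and $l(\cF_k)=l(\cF)$.

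Fact (iii) is the heart of the argument, and the step I expect to be the main obstacle. Suppose $\cH$ covers $\cF_k$ with $\cost_q(\cH)\le\tfrac12$; the goal is an (integral) cover of $\cF$ of $\cost_{kq}$ at most $\tfrac12$. Pick $\sigma\in[k]^X$ uniformly at random, call $S\subseteq X\times[k]$ \emph{$\sigma$-admissible} if $S\subseteq\{(x,\sigma(x))\colon x\in X\}$, and set $\cH_\sigma:=\{\pi(S)\colon S\in\cH,\ S\text{ is }\sigma\text{-admissible}\}$. For every minimal $A\in\cF$ the set $\{(x,\sigma(x))\colon x\in A\}$ lies in $\cF_k$, hence contains some $S\in\cH$, and this $S$ is automatically $\sigma$-admissible with $\pi(S)\subseteq A$; so $\cH_\sigma$ covers $\cF$ \emph{for every} $\sigma$. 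On the other hand a $\sigma$-admissible $S$ contains at most one copy of each point, so $|\pi(S)|=|S|$, and a fixed such $S$ is $\sigma$-admissible with probability $k^{-|S|}$. Therefore
\begin{equation*}
\EE_\sigma\big[\cost_{kq}(\cH_\sigma)\big]\ \le\ \sum_{S\in\cH}k^{-|S|}\,(kq)^{|S|}\ =\ \cost_q(\cH)\ \le\ \tfrac12,
\end{equation*}
so some $\sigma$ yields a $(kq)$-cheap cover of $\cF$; letting $q\uparrow\qc(\cF_k)$ gives $\qc(\cF)\ge k\,\qc(\cF_k)$. The point to watch is that the naive projection $\pi(\cH)$ need \emph{not} itself be cheap — its cost can blow up by a factor $k^{|S|}$ on a set $S$ — which is exactly why one first discards all non-$\sigma$-admissible members of $\cH$ and then averages over $\sigma$.

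Finally, applying \cref{parkpham} to $\cF_k$ (legitimate since $l(\cF_k)=l(\cF)\ge2$) and using (ii)--(iii),
\begin{equation*}
\pc(\cF_k)\ \le\ K\,\qc(\cF_k)\,\log_2 l(\cF_k)\ \le\ \frac{K\,\qc(\cF)\,\log_2 l(\cF)}{k}.
\end{equation*}
Writing $a:=K\qc(\cF)\log_2 l(\cF)>0$ and combining with (i) gives $1-\pc(\cF)=(1-\pc(\cF_k))^k\ge(1-a/k)^k$ for all $k>a$; since $(1-a/k)^k$ increases to $\e^{-a}$ as $k\to\infty$, we conclude $1-\pc(\cF)\ge\e^{-a}$, which is exactly \eqref{ourbnd}.
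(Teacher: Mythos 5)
Your proposal is correct and follows essentially the same route as the paper: clone $\cF$ to $\cF_k$, relate $\pc$, $l$ and $\qc$ of the two families, apply \cref{parkpham} to $\cF_k$, and let $k\to\infty$. Your proof of (iii) is the same averaging argument the paper uses (your uniformly random $\sigma\in[k]^X$ is exactly a uniformly random section $X'$ of $X\times[k]$, and your $\cH_\sigma$ is the paper's $\pi(\cH_{X'})$); the only small economy is that you prove just the inequality $\qc(\cF_k)\le\qc(\cF)/k$ actually needed, whereas the paper records the full equality.
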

Since $\e^{-xy}\ge (1-x)^y\ge 1-xy$, the bound \eqref{ourbnd} does indeed imply \eqref{eqpp2} and \eqref{parkphameq}. The strategy of the proof is simply to apply Theorem~\ref{parkpham} to an appropriate transformed family~$\cF_k$. The main ingredient is a `cloning' lemma showing that the expectation threshold changes as expected under this transformation.

The improvement in \eqref{ourbnd} comparing with \eqref{eqpp2} is most significant when $K\qc$ is relatively large. In particular, \eqref{eqpp2} gives no information when $K\qc \ge 1$. However, as pointed out by Keith Frankston, in this case neither does \eqref{ourbnd}, because of the trivial bound $\pc \le 2^{-1/l}$ obtained by considering a single minimal element of $\cF$. This trivial bound is strictly stronger than \eqref{ourbnd} already when $l \ge 3$ and $K\qc \ge 1$. Thus it is far from clear that \cref{ourthm} will have any applications. Nevertheless, we hope that the method of proof is interesting, namely using a `cloning' lemma to transfer results from the small $p$ regime to general $p$.

\section{The $k$-clone model}
Let $\cF\subset \cP(X)$ be a non-trivial increasing family, and let $k$ be a positive integer. Informally, we define the $k$-cloned version of $\cF$ as follows: instead of performing a single experiment for each $x\in X$ to determine whether $x\in X_p$, we perform $k$ experiments, declaring $x\in X_p$ if and only if at least one of $(x,1),\ldots,(x,k)$ is present in the corresponding random subset $Y_{p'}$ of $Y=X\times [k]$. Then $\cF_k$ will be the set of subsets of $Y$ corresponding to~$\cF$.

Formally, we define the \textit{$k$-cloned} family $\cF_k \subset \cP(X \times [k])$ to be
\begin{equation*}
\cF_k := \{S\subset X\times [k] : \pi(S) \in \cF \},
\end{equation*}
where $\pi\colon X \times [k] \to X$ is the projection onto the first coordinate,
and $\pi(S)$ denotes the image of a set $S$ under $\pi$.
Since $\cF$ is increasing and non-trivial, so is $\cF_k$.

If $Y_p$ is a $p$-random subset of $X \times [k]$, then $\pi(Y_p)$ is a $(1-(1-p)^k)$-random subset of ~$X$. It is thus immediate that
\begin{equation}\label{eqclonep}
\pc(\cF) = 1-(1-\pc(\cF_k))^k.
\end{equation}

If every cheapest cover $\cH$ of $\cF_k$ arose in the natural way (see below) from a cheapest cover of $\cF$, there would be a simple relationship between the expectation thresholds of $\cF$ and $\cF_k$. Although the former does not hold in general, the latter does.

\begin{lemma} \label{lemclone}
Let $k$ be a positive integer and let $\cF$ be a non-trivial increasing family of subsets of a finite set $X$. Then
\begin{equation*}
  \qc(\cF_k) = \frac{1}{k} \qc(\cF).
\end{equation*}
\end{lemma}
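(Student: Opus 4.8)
The plan is to prove $\qc(\cF_k) \ge \tfrac1k\qc(\cF)$ and $\qc(\cF_k) \le \tfrac1k\qc(\cF)$ separately, in each direction converting a cheap cover of one family into a cheap cover of the other and tracking how the cost parameter rescales. (A harmless observation to record at the outset: a $q$-cheap cover never contains $\emptyset$, since $q^0 = 1 > \tfrac12$.)

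For the lower bound, I would take a $q$-cheap cover $\cG$ of $\cF$ and, for each $S \in \cG$ and each map $f\colon S \to [k]$, form $S_f := \{(x,f(x)) : x \in S\} \subset X \times [k]$; then set $\cG' := \{S_f : S \in \cG,\ f\colon S\to[k]\}$. First I would check that $\cG'$ covers $\cF_k$: if $T \in \cF_k$ then $\pi(T) \in \cF$, so some $S \in \cG$ has $S \subset \pi(T)$, and picking for each $x \in S$ a clone $(x,j) \in T$ defines an $f$ with $S_f \subset T$. Since $|S_f| = |S|$ and there are $k^{|S|}$ maps $f\colon S \to [k]$, this gives $\cost_{q'}(\cG') = \sum_{S \in \cG} k^{|S|}(q')^{|S|} = \cost_{kq'}(\cG)$, which for $q' = q/k$ equals $\cost_q(\cG) \le \tfrac12$. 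So any $q$-cheap cover of $\cF$ yields a $(q/k)$-cheap cover of $\cF_k$; taking the supremum over admissible $q$ gives $\qc(\cF_k) \ge \tfrac1k\qc(\cF)$.

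The upper bound is the crux. Mapping $S' \in \cH$ to $\pi(S')$ is the obvious move, but it fails because $|\pi(S')|$ can be far below $|S'|$ (a set of many clones of a single element projects to a singleton), which would blow up the cost. The remedy I would use is a random \emph{section}: for $\sigma = (\sigma_x)_{x \in X} \in [k]^X$ write $s_\sigma(A) := \{(x,\sigma_x) : x \in A\}$ for $A \subset X$, and given a $q'$-cheap cover $\cH$ of $\cF_k$ define $\cG_\sigma := \{\pi(S') : S' \in \cH,\ S' \subset s_\sigma(X)\}$. This covers $\cF$ for every $\sigma$: for $T \in \cF$ we have $\pi(s_\sigma(T)) = T \in \cF$, so $s_\sigma(T) \in \cF_k$, so some $S' \in \cH$ has $S' \subset s_\sigma(T) \subset s_\sigma(X)$, whence $\pi(S') \in \cG_\sigma$ with $\pi(S') \subset T$. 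Choosing $\sigma$ uniformly at random: an $S'$ with two points in some $\{x\} \times [k]$ lies in $s_\sigma(X)$ with probability $0$, while any other $S'$ has at most one clone per element, so $|\pi(S')| = |S'|$ and $\PP(S' \subset s_\sigma(X)) = k^{-|S'|}$. From $\cost_q(\cG_\sigma) \le \sum_{S' \in \cH,\ S' \subset s_\sigma(X)} q^{|\pi(S')|}$ and taking expectations, $\EE_\sigma[\cost_q(\cG_\sigma)] \le \sum_{S' \in \cH}(q/k)^{|S'|} = \cost_{q/k}(\cH)$; with $q = kq'$ this is $\cost_{q'}(\cH) \le \tfrac12$, so some $\sigma$ gives a $(kq')$-cheap cover of $\cF$. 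Hence $\qc(\cF_k) \le \tfrac1k\qc(\cF)$, and combining with the lower bound proves the lemma.

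The main obstacle, then, is the upper bound, and the point that makes it go through is the exact numerology of the random section: restricting to sets contained in $s_\sigma(X)$ both forces $|\pi(S')| = |S'|$ (so projecting does not distort sizes) and supplies precisely the factor $k^{-|S'|}$ needed to pass from parameter $kq'$ back to $q'$ — the mirror image of the factor $k^{|S|}$ that appears, in the opposite role, in the lower-bound construction.
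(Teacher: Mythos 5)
Your proof is correct and follows essentially the same approach as the paper: the lower-bound cloning construction is identical (your $\cG'$ is the paper's $\cH=\bigcup_{S\in\cG}\Psi(S)$), and your random-section argument for the upper bound is the paper's averaging over all $X'\in\Psi(X)$ phrased probabilistically, with the same key computation $\PP(S'\subset s_\sigma(X))\le k^{-|S'|}$.
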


\begin{proof}
For the lower bound, let $\cG$ be a $q$-cheap cover of $\cF$. It suffices to construct a ($q/k$)-cheap cover of $\cF_k$. 

Given $S \subset X$ let 
\begin{equation}
 \Psi(S) = \{S' \subset X \times [k]\colon \pi(S') = S \text{ and } |S'| = |S|\} \label{defps}
\end{equation}
be the set of \emph{minimal} pre-images of $S$ under $\pi$.
Note that $|\Psi(S)| = k^{|S|}$, since we must choose one $(x,i)\in X\times [k]$ for each $x\in S$. Let
\begin{equation}
 \cH := \bigcup_{S \in \cG} \Psi(S). \label{defh}
\end{equation}
It is immediate that $\cH\subset \cP(X \times [k])$ is a cover of $\cF_k$. Indeed, if $Y \in \cF_k$ then $\pi(Y) \in \cF$ and hence there is $Z \in \cG$ such that $\pi(Y) \supset Z$. Then $Y \in \gen{\Psi(Z)} \subset \gen{\cH}$. Furthermore,
\begin{equation*}
\cost_{q/k}(\cH) = \sum_{T \in \cH} \left(\frac{q}{k}\right)^{|T|} = \sum_{S \in \cG} \sum_{T \in \Psi(S)} \left(\frac{q}{k}\right)^{|T|} = \sum_{S \in \cG} k^{|S|} \left(\frac{q}{k}\right)^{|S|} = \cost_q(\cG).
\end{equation*}
Since $\cG$ is $q$-cheap, $\cH$ is ($q/k$)-cheap. Hence $\qc(\cF_k) \ge \frac{1}{k} \qc(\cF)$.

Turning to the upper bound, let $\cH$ be an arbitrary $q$-cheap cover of $\cF_k$. It suffices to deduce the existence of a $kq$-cheap cover of $\cF$. For this we use an averaging argument, over all $k^{|X|}$ copies of $X$ living inside $X\times [k]$.

Formally, for $X' \in \Psi(X)$ let $\cH_{X'} = \cP(X') \cap \cH$.
Observe that $\cH_{X'}$ is a cover of $\cP(X') \cap \cF_k$. The projection~$\pi$ induces an inclusion- and size-preserving bijection from $\cP(X')$ to $\cP(X)$. Hence, $\pi(\cH_{X'})$ is a cover of~ $\cF$ and $\cost_r(\cH_{X'}) = \cost_r(\pi(\cH_{X'}))$ for any $r \in [0,1]$.

A set $S\subset X\times [k]$ is contained in \emph{some} $X'\in \Psi(X)$ if and only if $S$ contains no `duplicates' (two elements from the same fibre $\{x\}\times [k])$. In that case, $S$ is contained in exactly $k^{|X|-|S|}$ sets $X'\in \Psi(X)$, since in constructing $X'$ we have a free choice from each $\{x\}\times [k]$ for $x \in X \setminus \pi(S)$.
Thus
\begin{multline*}
\sum_{X' \in \Psi(X)} \cost_{kq}( \cH_{X'} ) = \sum_{X' \in \Psi(X)} \sum_{S \in \cH_{X'}} (kq)^{|S|} = \sum_{S \in \cH} (kq)^{|S|} \sum_{X' \in \Psi(X)} \IND(S \subset X') \nonumber \\
\le \sum_{S \in \cH} (kq)^{|S|} k^{|X|-|S|}  =  k^{|X|} \sum_{S \in \cH} q^{|S|} = k^{|X|}  \cost_q(\cH).
\end{multline*}

Therefore there is some $X' \in \Psi(X)$ such that $\cost_{kq}(\cH_{X'}) \le \cost_q(\cH)$. Then $\pi(\cH_{X'})$ is a $kq$-cheap cover of $\cF$, as desired.
\end{proof}

We are now ready to prove Theorem~\ref{ourthm}.
\begin{proof}[Proof of Theorem~\ref{ourthm}]
Consider the $k$-clone $\cF_k$ of $\cF$ for $k > \frac1{K\qc(\cF)\log_2 l(\cF)}$. Applying the Park--Pham result, Theorem~\ref{parkpham}, to $\cF_k$ and then using Lemma~\ref{lemclone} we have
\begin{equation*}
 \pc(\cF_k) \le K  \qc(\cF_k) \log_2 l(\cF_k) = K \frac{\qc(\cF)}{k} \log_2 l(\cF),
\end{equation*}
since $l(\cF_k) = l(\cF)$. Applying \eqref{eqclonep} we deduce that 
\begin{equation*}
 \pc(\cF) \le  1 - \left(1 - K  \frac{\qc(\cF)}{k} \log_2 l(\cF)\right)^k.
\end{equation*}
Taking the $k \to \infty$ limit yields the result.
\end{proof}

\section{Discussion} \label{disc}
The proof of Theorem~\ref{ourthm} is based on `scaling' the bound \eqref{parkphameq} up from the sparse case. Since we can take our cloning factor $k$ to be arbitrarily large, only the asymptotic behaviour of \eqref{parkphameq} near zero matters. In particular, we obtain exactly the same final result if we start from \eqref{eqpp2} instead. More generally, to deduce \eqref{ourbnd} we only need the bound $\pc(\cF) \le (K+o(1)) \qc(\cF) \log_2 l(\cF)$ as $\qc(\cF)\to 0$ with $l(\cF)$ fixed.

At first sight, it might seem `obvious' that $\qc(\cF_k)=\qc(\cF)/k$, since (one might expect) every $q$-cheapest cover of $\cF_k$ arises from a $q$-cheapest cover of $\cF$ via the cloning procedure in \eqref{defps} and \eqref{defh}. However, this is not true in the general case. For $\cF = \{\{1,2,3\}, \{1,2\}, \{1,3\}\}$ we have $\qc(\cF_2) = 1/4$ and $\cF_2$ admits a $1/4$-cheapest cover not arising in this way.

Related to this, one might expect that if $\cF$ is symmetric under the action of some group~$G$ on the ground-set $X$, then for every $q \in [0,1]$ it has a $q$-cheapest cover which has the same symmetry. This again is not true in general.

Finally, we note that obvious modifications to the proof of \cref{lemclone} show that the \textit{fractional expectation threshold} $\qf$ defined in \cite{frankston,talagrand}, which behaves better with respect to symmetry, scales under cloning in the same way that $\qc$ does: we have $\qf(\cF_k) = \qf(\cF) / k$.

Since the first version of the current paper was written, B.~Park and J.~Vondr\'ak~\cite{parkvondrak} have proved a non-uniform version of the Park--Pham theorem. They suggest that the `cloning' procedure described above gives an alternative proof of their result, but with worse constants. However, \emph{starting} from their Theorem 1, one can apply the cloning procedure (adapted in the natural way to the non-uniform setting) to deduce the (very slightly) stronger bound $p_x = 1 - \e^{-\qc (4\lfloor \log_2(2l) \rfloor + 7)}.$ In other words, our reduction can be combined with their result also.

\end{document}